\author{Paul \textsc{Poncet}}
\address{CMAP, \'{E}cole Polytechnique, Route de Saclay, 91128 Palaiseau Cedex, France \\
and INRIA, Saclay--\^{I}le-de-France}
\email{poncet@cmap.polytechnique.fr}
\newcommand{\reels}{\mathbb R}
\newcommand{\dint}[1]{\int^{\scriptscriptstyle\infty}_{#1}\!\!}
\newtheorem{theorem}{Theorem}[section]
\newtheorem{corollary}[theorem]{Corollary}
\newtheorem{proposition}[theorem]{Proposition}
\newtheorem{lemma}[theorem]{Lemma}
\theoremstyle{definition}
\newtheorem{example}[theorem]{Example}
\begin{document}

\title{Pseudo-multiplications and their properties}

\date{\today}

\subjclass[2010]{Primary 03E72; 
                 Secondary 49J52} 

\keywords{pseudo-arithmetical operations; pseudo-multiplications}

\begin{abstract}
We examine some properties of pseudo-multiplications, which are a special kind of associative binary relations defined on $\overline{\reels}_+ \times \overline{\reels}_+$. 
\end{abstract}

\maketitle

\section{Introduction and motivations}

In \cite[Chapter~I]{Poncet11}, we considered the idempotent Radon--Nikodym theorem due to Sugeno and Murofushi \cite{Sugeno87}, and we proved a converse statement in the special case of the Shilkret integral \cite{Shilkret71, Maslov87}. 
We would like to generalize this converse statement to the \textit{idempotent $\odot$-integral} 
$$
\dint{B} f \odot d\nu, 
$$ 
where $\odot$ is a \textit{pseudo-multiplication} (see e.g.\ Sugeno and Murofushi \cite{Sugeno87}, Benvenuti and Mesiar \cite{Benvenuti04}), i.e.\ a binary relation satisfying a series of natural properties.  
In this note we examine the properties of pseudo-multiplications we shall need to reach this goal. 


\section{Pseudo-multiplications}\label{secmax}

We consider a binary relation $\odot$ defined on $\overline{\reels}_+ \times \overline{\reels}_+$ with the following properties: 
\begin{itemize}
	\item associativity; 
	\item continuity on $(0, \infty) \times [0, \infty]$; 
	\item continuity of the map $s \mapsto s \odot t$ on $(0, \infty]$, for all $t$; 
	\item monotonicity in both components; 
	\item existence of a left identity element $1_{\odot}$, i.e.\ $1_{\odot} \odot t =  t$ for all $t$; 
	\item absence of zero divisors, i.e.\ $s \odot t = 0 \Rightarrow 0 \in  \{s, t\}$, for all $s, t$; 
	\item $0$ is an annihilator, i.e.\ $0 \odot t = t \odot 0 = 0$, for all $t$. 
\end{itemize}
We call such a $\odot$ a \textit{pseudo-multiplication}. Note that the axioms above are stronger than in \cite{Sugeno87}, where associativity is not assumed. 

The map $O : \overline{\reels}_+ \rightarrow \overline{\reels}_+$ defined by $O(t) = \inf_{s > 0} s \odot t$ is a \textit{kernel} in the sense that, for all $s, t$: 
\begin{itemize}
	\item $O(s) \leqslant O(t)$ whenever $s \leqslant t$;   
	\item $O(t) \leqslant t$; 
	\item $O(O(t)) = O(t)$. 
\end{itemize}
An element $t$ of $\overline{\reels}_+$ is \textit{$\odot$-finite} if $O(t) = 0$ (and $t$ is \textit{$\odot$-infinite} otherwise). 
We conventionally write $t \ll_{\odot} \infty$ for a $\odot$-finite element $t$. 

\begin{example}
The usual multiplication $\times$ is a pseudo-multi\-pli\-cation with $1_{\times} = 1$ and every element but $\infty$ is $\times$-finite; in this case, the idempotent $\odot$-integral specializes to the Shilkret integral \cite{Shilkret71}. 
The infimum $\wedge$ is a pseudo-multi\-pli\-cation with $1_{\wedge} = \infty$ and every element is $\wedge$-finite; in this case, the idempotent $\odot$-integral specializes to the Sugeno integral \cite{Sugeno74}. 
\end{example}

\begin{lemma}\label{lem:finite}
Given a pseudo-multiplication $\odot$, there exists some positive $\odot$-finite element if and only if $1_{\odot}$ is $\odot$-finite. 
\end{lemma}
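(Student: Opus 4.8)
The plan is to prove the two implications separately, after first recording the basic fact that the left identity is strictly positive. Indeed, if $1_{\odot} = 0$, then the left-identity axiom combined with the annihilator axiom gives $t = 1_{\odot} \odot t = 0 \odot t = 0$ for every $t$, which is absurd; hence $1_{\odot} > 0$. With this in hand, the implication ``$1_{\odot}$ is $\odot$-finite $\Rightarrow$ there exists a positive $\odot$-finite element'' is immediate, since $1_{\odot}$ itself is then such an element. All the substance therefore lies in the converse: assuming some positive $a$ satisfies $O(a) = 0$, I must deduce $O(1_{\odot}) = 0$.

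For the converse, the key algebraic observation I would exploit is that, for every $s > 0$, associativity together with $1_{\odot} \odot a = a$ yields
$$
(s \odot 1_{\odot}) \odot a = s \odot (1_{\odot} \odot a) = s \odot a .
$$
Writing $\phi(s) = s \odot 1_{\odot}$ and $\psi(s) = s \odot a$, both maps are nondecreasing (monotonicity, with the direction fixed by $0 \odot t = 0 \le t = 1_{\odot} \odot t$) and continuous on $(0, \infty]$, so their infima over $s > 0$ are attained as one-sided limits: $\lim_{s \to 0^{+}} \phi(s) = O(1_{\odot}) =: m$ and $\lim_{s \to 0^{+}} \psi(s) = O(a) = 0$. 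Note also that $\phi(s) = s \odot 1_{\odot} > 0$, because $s > 0$, $1_{\odot} > 0$, and $\odot$ has no zero divisors; thus each $\phi(s)$ lies in the domain $(0, \infty]$ on which $x \mapsto x \odot a$ is continuous.

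I would then argue by contradiction. Suppose $m = O(1_{\odot}) > 0$. Letting $s \to 0^{+}$ in the displayed identity, the right-hand side tends to $0$, while on the left-hand side $\phi(s) \to m$ and the continuity of $x \mapsto x \odot a$ at the point $m \in (0, \infty]$ gives $\phi(s) \odot a \to m \odot a$. Hence $m \odot a = 0$ with both $m > 0$ and $a > 0$, contradicting the absence of zero divisors. Therefore $m = 0$, i.e.\ $1_{\odot}$ is $\odot$-finite, as required.

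The step I expect to demand the most care is precisely this passage to the limit. I must justify that the infimum defining $O$ is genuinely a limit as $s \to 0^{+}$ (which rests on monotonicity) and that I may invoke continuity of $x \mapsto x \odot a$ at $x = m$. This is exactly where the hypothesis $m > 0$ enters, since continuity is only assumed on $(0, \infty]$ and not at $0$; the value $m = \infty$ is harmless, because continuity is assumed up to and including $\infty$, and the contradiction $m \odot a = 0$ still follows. It is the no-zero-divisors axiom, applied to the two strictly positive elements $m$ and $a$, that finally converts $m \odot a = 0$ into $m = 0$.
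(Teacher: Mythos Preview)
Your proof is correct and follows essentially the same route as the paper's: both exploit the identity $(s \odot 1_{\odot}) \odot a = s \odot a$ to obtain $O(1_{\odot}) \odot a = 0$ and then conclude via the absence of zero divisors. The only difference is that where you pass to the limit using continuity and a contradiction argument, the paper simply uses monotonicity in the first argument to get $O(1_{\odot}) \odot a \leqslant (s \odot 1_{\odot}) \odot a = s \odot a$ for every $s > 0$, hence $O(1_{\odot}) \odot a \leqslant O(a) = 0$ directly.
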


\begin{proof}
Assume that $O(t) = 0$ for some $t > 0$. Then $O(1_{\odot}) \odot t \leqslant s \odot 1_{\odot} \odot t = s \odot t$ for all $s > 0$, so that $O(1_{\odot}) \odot t \leqslant O(t) = 0$. Since $\odot$ has no zero divisors, this implies that $O(1_{\odot}) = 0$, i.e.\ $1_{\odot}$ is $\odot$-finite. 
\end{proof}

If $O(1_{\odot}) = 0$, we say that the pseudo-multiplication $\odot$ is \textit{non-degenerate}. This amounts to say that the set of $\odot$-finite elements differs from $\{0\}$. 

\begin{example}\label{ex:deg}
The multiplication $\times$ and the infimum $\wedge$ are non-degenerate pseudo-multiplications. 
The binary relation $\otimes$ defined by $s \otimes t = t$ if $s > 0$ and $0 \otimes t = 0$ is an example of degenerate pseudo-multiplication. 
\end{example}

\begin{proposition}\label{lem:inv}
Given a non-degenerate pseudo-multiplication $\odot$, the following conditions are equivalent for an element $t \in \overline{\reels}_+$: 
\begin{itemize}
	\item $t$ is $\odot$-finite;
	\item $s \odot t \ll_{\odot} \infty$ for some $s > 0$; 	
	\item $s \odot t \leqslant 1_{\odot}$ for some $s > 0$; 
	\item $t \odot s' \leqslant 1_{\odot}$ for some $s' > 0$; 
	\item $t \odot s' \ll_{\odot} \infty$ for some $s' > 0$.  	
\end{itemize}
\end{proposition}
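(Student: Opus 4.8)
The plan is to establish the cyclic chain $(\mathrm{i})\Rightarrow(\mathrm{ii})\Rightarrow(\mathrm{iii})\Rightarrow(\mathrm{iv})\Rightarrow(\mathrm{v})\Rightarrow(\mathrm{i})$, writing $(\mathrm{i})$--$(\mathrm{v})$ for the five conditions in the order stated. Throughout, one first notes that $1_{\odot} > 0$, since $1_{\odot} = 0$ would force $t = 1_{\odot} \odot t = 0$ for every $t$. The opening steps are routine. For $(\mathrm{i})\Rightarrow(\mathrm{ii})$ I would simply take $s = 1_{\odot}$, so that $s \odot t = t$ and $O(s\odot t) = O(t) = 0$. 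For $(\mathrm{ii})\Rightarrow(\mathrm{iii})$ I would expand, using associativity, $O(s \odot t) = \inf_{r > 0}(r \odot s)\odot t$; since this infimum is $0 < 1_{\odot}$, some $(r \odot s)\odot t \leqslant 1_{\odot}$, and $r \odot s > 0$ by absence of zero divisors, so $r \odot s$ is the sought left factor.

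The heart of the argument is $(\mathrm{iii})\Rightarrow(\mathrm{iv})$, where a right factor must be manufactured out of a left one; this is where the one-sidedness of both $O$ and of the identity has to be defeated, and I expect it to be the main obstacle. Starting from some $s_0 \odot t \leqslant 1_{\odot}$, I would first arrange $s_0$ to be finite: any finite $s_0' \in (0, s_0]$ still satisfies $s_0' \odot t \leqslant s_0 \odot t \leqslant 1_{\odot}$ by monotonicity. Associativity and the left identity then yield, for every $s'$, the inequality $s_0 \odot (t \odot s') = (s_0 \odot t)\odot s' \leqslant 1_{\odot} \odot s' = s'$. Letting $s' \downarrow 0$, monotonicity makes $t \odot s'$ decrease to some limit $L \geqslant 0$, and the joint continuity on $(0,\infty) \times [0,\infty]$ — applicable precisely because $s_0$ is finite — turns the inequality into $s_0 \odot L \leqslant 0$; absence of zero divisors then forces $L = 0$. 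Hence $t \odot s' \to 0 < 1_{\odot}$, so $t \odot s' \leqslant 1_{\odot}$ for all small enough $s' > 0$, which is $(\mathrm{iv})$.

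The cycle is then closed by two short steps. For $(\mathrm{iv})\Rightarrow(\mathrm{v})$ I would invoke monotonicity of the kernel $O$ together with non-degeneracy: $O(t \odot s') \leqslant O(1_{\odot}) = 0$, so $t \odot s' \ll_{\odot} \infty$; this is the single place where non-degeneracy is essential. For $(\mathrm{v})\Rightarrow(\mathrm{i})$ I would use $O(t) \leqslant r \odot t$ for all $r > 0$ to obtain $O(t)\odot s' \leqslant (r \odot t)\odot s'$, and then, taking the infimum over $r$ and using associativity, $O(t)\odot s' \leqslant \inf_{r>0}(r\odot t)\odot s' = \inf_{r>0} r \odot (t \odot s') = O(t \odot s') = 0$; absence of zero divisors (with $s' > 0$) gives $O(t) = 0$. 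Assembling the five implications yields the stated equivalence.
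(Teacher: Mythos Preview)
Your proof is correct and follows essentially the same cyclic scheme $(\mathrm{i})\Rightarrow(\mathrm{ii})\Rightarrow(\mathrm{iii})\Rightarrow(\mathrm{iv})\Rightarrow(\mathrm{v})\Rightarrow(\mathrm{i})$ as the paper, with the same key mechanism in $(\mathrm{iii})\Rightarrow(\mathrm{iv})$ (pushing $s_0\odot(t\odot s')\leqslant s'$ to the limit and invoking absence of zero divisors). You are in fact slightly more careful than the paper: you explicitly reduce to a finite $s_0$ before using joint continuity, and you spell out the step $(\mathrm{iv})\Rightarrow(\mathrm{v})$ via $O(t\odot s')\leqslant O(1_{\odot})=0$, which the paper leaves implicit.
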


\begin{proof}
Assume that $t$ is $\odot$-finite. Then $s \odot t \ll_{\odot} \infty$ with $s = 1_{\odot}$. 

Assume that $s \odot t \ll_{\odot} \infty$ for some $s > 0$. Then $\lim_{s' \rightarrow 0^+} s' \odot s \odot t = 0$, so there exists $s' > 0$ such that $(s' \odot s) \odot t \leqslant 1_{\odot}$. 

Assume that $s \odot t \leqslant 1_{\odot}$ for some $s > 0$. Then $s \odot t \odot s' \leqslant 1_{\odot} \odot s' = s'$ for all $s' > 0$, so that $\lim_{s' \rightarrow 0^+} s \odot t \odot s' = 0$. By continuity and the fact that $\odot$ has no zero divisors we get $\lim_{s' \rightarrow 0^+} t \odot s' = 0$. This implies that $t \odot s' \leqslant 1_{\odot}$ for some $s' > 0$. 

Assume that $t \odot s' \ll_{\odot} \infty$ for some $s' > 0$. Then $\lim_{s \rightarrow 0^{+}} s \odot t \odot s' = 0$, so that $O(t) \odot s' = 0$. This shows that $O(t) = 0$, i.e.\ $t$ is $\odot$-finite. 
\end{proof}



\begin{lemma}\label{lem:odotfi}
Given a pseudo-multiplication $\odot$, the set of $\odot$-finite elements is either $\{0\}$, or $[0, \infty]$, or of the form $[0, \phi)$ for some $\phi \in (1_{\odot}, \infty]$ such that $O(\phi) = \phi$. 
\end{lemma}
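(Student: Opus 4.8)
The plan is to analyse the set $F = \{t \in \overline{\reels}_+ : O(t) = 0\}$ of $\odot$-finite elements as a subset of the chain $\overline{\reels}_+$. First I would record that $F$ is a down-set containing $0$: since $O$ is monotone, $s \leqslant t$ together with $O(t) = 0$ forces $O(s) = 0$, and $O(0) \leqslant 0$ gives $0 \in F$. A down-set of the totally ordered set $[0,\infty]$ containing $0$ is necessarily one of $\{0\}$, $[0,\phi)$, $[0,\phi]$, or $[0,\infty]$, where $\phi = \sup F$. So the task reduces to deciding which of these shapes can actually occur and to identifying $\phi$.

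Next I would dispose of the degenerate and trivially excluded possibilities. If $F \neq \{0\}$ then $F$ contains a positive $\odot$-finite element, so Lemma~\ref{lem:finite} yields $1_\odot \in F$; hence $\phi \geqslant 1_\odot > 0$. In particular $1_\odot \in F$ forces $\phi > 1_\odot$ whenever $\phi$ is not attained, which already rules out the shape $[0,\phi)$ with $\phi \leqslant 1_\odot$. It then remains to treat the two genuine cases according to whether the supremum $\phi$ is attained.

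A key observation, used throughout, is that every $\odot$-infinite element $t$ has an $\odot$-infinite kernel value $O(t)$: indeed $O(t)$ is finite iff $O(O(t)) = 0$, and since $O(O(t)) = O(t)$ this is equivalent to $O(t) = 0$, i.e.\ to $t$ being finite. When $\phi \notin F$, I would use this to show $F = [0,\phi)$ with $O(\phi) = \phi$. The inequality $O(\phi) \leqslant \phi$ is the kernel property; for the reverse, $\phi$ is $\odot$-infinite (as $\phi \notin F$), so $O(\phi)$ is $\odot$-infinite as well, hence $O(\phi) \in [0,\infty]\setminus[0,\phi) = [\phi,\infty]$, whence $O(\phi) = \phi$. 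Together with $\phi > 1_\odot$ this gives the third case of the statement.

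The main obstacle is to exclude a finite attained supremum, that is, to prove $F = [0,\phi]$ is impossible when $\phi < \infty$; once this is established, $\phi \in F$ forces $\phi = \infty$ and hence $F = [0,\infty]$. I would argue by contradiction. Write $m = \max F$, so the $\odot$-infinite elements are exactly those $t > m$, and by the observation above $O(t) > m$ for every such $t$. On the other hand $O(m) = 0 < m$, so by definition of the infimum there is some $s_0 \in (0,\infty)$ with $s_0 \odot m < m$. Continuity of the map $t \mapsto s_0 \odot t$ at $m$ (valid since $s_0 \in (0,\infty)$) then produces a point $t_1 > m$ with $s_0 \odot t_1 < m$; but then $O(t_1) \leqslant s_0 \odot t_1 < m$, contradicting $O(t_1) > m$. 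Assembling the three cases yields exactly the trichotomy of the statement, with $\phi \in (1_\odot, \infty]$ and $O(\phi) = \phi$ in the last case. I expect this continuity-based contradiction to be the delicate point, the rest being order-theoretic bookkeeping.
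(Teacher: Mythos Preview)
Your argument is correct. The order-theoretic setup (down-set, trichotomy on intervals, use of Lemma~\ref{lem:finite} to place $1_\odot$ in $F$) matches the paper, and your ``key observation'' that $O(t)$ is $\odot$-infinite whenever $t$ is coincides with the paper's reasoning for the case $\phi=\infty$. The genuine difference is in how the case $\phi<\infty$ is handled. The paper approaches $\phi$ \emph{from above}: it picks $\odot$-infinite $t_n\downarrow\phi$, notes that $t_n':=O(t_n)$ are again $\odot$-infinite with $O(t_n')=t_n'$ and $t_n'\to\phi$, deduces $s\odot t_n'=t_n'$ for $0<s<1_\odot$, and passes to the limit via continuity to obtain $s\odot\phi=\phi$, hence $O(\phi)=\phi$. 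You instead argue by contradiction \emph{from below}: assuming $m=\max F<\infty$, you use $O(m)=0$ to get $s_0\odot m<m$ for some $s_0\in(0,\infty)$, then invoke continuity of $t\mapsto s_0\odot t$ at $m$ to push this inequality to some $t_1>m$, which clashes with $O(t_1)>m$. Both routes rely on continuity in the second variable at a finite first argument; yours is a bit more economical since it avoids the auxiliary sequence $(t_n')$ and the separate verification that $s\odot t_n'=t_n'$, while the paper's route has the advantage of producing the identity $s\odot\phi=\phi$ for $0<s<1_\odot$ along the way, information that is reused later in Theorem~\ref{lem:phi}.
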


\begin{proof}
Denote by $F_{\odot}$ the set of $\odot$-finite elements, and assume that $F_{\odot} \neq \{0\}$. 
Then, by Lemma~\ref{lem:finite}, $F_{\odot}$ is an interval containing $0$ and $1_{\odot}$, so it satisfies $[0, \phi) \subset F_{\odot} \subset [0, \phi]$ for some $\phi \geqslant 1_{\odot}$. 
If $\phi = \infty$ we have $F_{\odot} = [0, \infty)$ or $F_{\odot} = [0, \infty]$. In the case where $F_{\odot} = [0, \infty)$ the lemma asserts also that $O(\phi) = \phi$. We have $O(\phi) \leqslant \infty = \phi$. Suppose that $O(\phi) < \phi$. Then $O(\phi) \in [0, \phi) \subset F_{\odot}$, so $O(\phi)$ is $\odot$-finite. This means that $O(O(\phi)) = 0$, hence $O(\phi) = 0$. So $\phi$ is $\odot$-finite, a contradiction. 

Now suppose that $\phi < \infty$. 
Since $\phi$ is the supremum of $F_{\odot}$, there exists a decreasing sequence $(t_n)$ of $\odot$-infinite elements that tends to $\phi$. 
Let $t_n' = O(t_n)$. Since $O(t_n') = O(O(t_n)) = O(t_n) = t_n' > 0$, $t_n'$ is $\odot$-infinite, hence $\phi \leqslant t_n' \leqslant t_n$ for all $n$. This implies that $t_n' \rightarrow \phi$ when $n \rightarrow \infty$. 
Again, $O(t_n') = t_n'$, so that $s \odot t_n' = t_n'$ for all $0 < s < 1_{\odot}$ and all $n$. With $n \rightarrow \infty$, we obtain $s \odot \phi  = \phi$ for all $0 < s < 1_{\odot}$. This shows that $O(\phi) = \phi > 0$, so $\phi$ is $\odot$-infinite, and $F_{\odot} = [0, \phi)$.  
\end{proof}

\begin{example}
We have $F_{\times} = [0, \infty)$, $F_{\wedge} = [0, \infty]$, and $F_{\otimes} = \{0\}$ (see the definition of $\otimes$ in Example~\ref{ex:deg}). 
Given $0 < \phi < \infty$ we can also build an example of pseudo-multiplication $\odot_{\phi}$ with $F_{\odot_{\phi}} = [0, \phi)$ as follows. 
If $\tanh$ denotes the hyperbolic tangent, we define $s \odot_{\phi} t = \max(s, t)$ if $s \geqslant \phi$ or $t \geqslant \phi$, and $s \odot_{\phi} t = \phi \tanh(\tanh^{-1}(s/\phi) \tanh^{-1}(t/\phi))$ otherwise. 
\end{example}


\begin{lemma}\label{lem:com}
A pseudo-multiplication $\odot$ is non-degenerate if and only if the monoid $([0, 1_{\odot}], \odot)$ is commutative. 
\end{lemma}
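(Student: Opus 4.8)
The plan is to treat the two implications separately, the converse being immediate and the direct one carrying all the content.

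For the implication from commutativity to non-degeneracy, no contrapositive is needed. Assuming the operation on $[0, 1_{\odot}]$ is commutative, every $s \in (0, 1_{\odot}]$ satisfies $s \odot 1_{\odot} = 1_{\odot} \odot s = s$, whence $O(1_{\odot}) = \inf_{s > 0} s \odot 1_{\odot} \leqslant \inf_{0 < s \leqslant 1_{\odot}} s = 0$, so $\odot$ is non-degenerate.

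For the converse, assume $\odot$ non-degenerate. First, $[0, 1_{\odot}]$ is $\odot$-stable by monotonicity, since $s, t \leqslant 1_{\odot}$ gives $s \odot t \leqslant 1_{\odot} \odot 1_{\odot} = 1_{\odot}$. The key preliminary step is that $1_{\odot}$ is a \emph{two-sided} identity on $[0, 1_{\odot}]$. I would introduce $g(s) = s \odot 1_{\odot}$; associativity together with $1_{\odot} \odot 1_{\odot} = 1_{\odot}$ gives $g \circ g = g$, so $g$ is an increasing idempotent self-map of $[0, 1_{\odot}]$, continuous on $(0, 1_{\odot}]$ by the partial-continuity axiom, with $g(1_{\odot}) = 1_{\odot}$ and $\lim_{s \to 0^+} g(s) = O(1_{\odot}) = 0$ by non-degeneracy, so that $g$ is in fact continuous up to $0$. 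I would then show such a map is the identity: its fixed-point set $E$ is closed in $[0, 1_{\odot}]$ and contains $0$ and $1_{\odot}$; if $g(s_0) \neq s_0$ for some $s_0$, I squeeze $s_0$ strictly between consecutive points $a < b$ of $E$, observe that $g$ maps $(a, b)$ into $E \cap [a, b] = \{a, b\}$ by idempotence and monotonicity, hence is constant there by continuity, and derive a contradiction with continuity at $a$ or at $b$. Thus $s \odot 1_{\odot} = s$ on $[0, 1_{\odot}]$ and $([0, 1_{\odot}], \odot)$ is a genuine monoid; note also that no zero divisors forces $g(s) > 0$ for $s > 0$, which the argument uses to keep the relevant fixed points positive.

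It then remains to obtain commutativity. I would first upgrade continuity to the whole square $[0, 1_{\odot}]^2$: the only points not covered by the continuity axiom are those with a zero coordinate, and there $0 \leqslant s \odot t \leqslant s \odot 1_{\odot} = g(s) \to 0$ yields continuity by squeezing. At this stage $([0, 1_{\odot}], \odot)$ is a compact connected topological monoid whose identity $1_{\odot}$ and annihilator $0$ are its two endpoints, i.e.\ an I-semigroup in the sense of Mostert and Shields, and I would invoke their structure theorem, by which every such semigroup is an ordinal sum of $\min$-blocks and Archimedean blocks and is in particular commutative. The main obstacle is exactly this final step: the passage from associativity, monotonicity and continuity to commutativity is the deep part and is not elementary, resting on the classification of I-semigroups; everything before it uses only the stated axioms together with non-degeneracy. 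I would additionally check that the reasoning covers the case $1_{\odot} = \infty$ (as for $\wedge$), where $[0, 1_{\odot}] = [0, \infty]$ remains a compact connected arc and the arguments are unchanged.
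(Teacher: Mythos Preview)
Your proposal is correct and follows the same overall strategy as the paper: for the nontrivial direction, establish joint continuity of $\odot$ on $[0,1_\odot]^2$ and then invoke a classical theorem on compact connected interval semigroups to conclude commutativity. The execution differs. The paper handles continuity at points with first coordinate $0$ by a $\max$-trick: with $s_n' = s_n \vee \varepsilon$ one has $(s_n \vee \varepsilon)\odot t_n = (s_n\odot t_n)\vee(\varepsilon\odot t_n)$, and passing to the limit superior bounds $\limsup s_n\odot t_n$ by $\varepsilon\odot t$, hence by $O(t)=0$ since $t\leqslant 1_\odot$ and $\odot$ is non-degenerate; commutativity is then read off from Faucett's lemma. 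You instead insert a preliminary step showing that $1_\odot$ is a \emph{two-sided} identity on $[0,1_\odot]$ via the fixed-point argument for the increasing continuous idempotent map $g(s)=s\odot 1_\odot$; this buys a cleaner squeeze $0\leqslant s_n\odot t_n\leqslant s_n\odot 1_\odot=s_n\to 0$ for continuity at zero, and also makes the word ``monoid'' in the statement honest (the paper leaves this implicit). Your final appeal is to the Mostert--Shields structure theorem for I-semigroups rather than Faucett; these are interchangeable here and carry the same non-elementary content.
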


\begin{proof}
Assume that $([0, 1_{\odot}], \odot)$ is commutative. Then 
\begin{align*}
O(1_{\odot}) &= \inf_{s \in (0, 1_{\odot}]} s \odot 1_{\odot} = \inf_{s \in (0, 1_{\odot}]} 1_{\odot} \odot s \\
             &= \inf_{s \in (0, 1_{\odot}]} s = 0.
\end{align*}
This shows that $\odot$ is non-degenerate. 

Conversely, assume that $\odot$ is non-degenerate. We show that, in $[0, 1_{\odot}]$, the binary relation $\odot$ is continuous; this will imply, by Faucett's theorem \cite[Lemma~5]{Faucett55b}, that $([0, 1_{\odot}], \odot)$ is commutative. 
So let $s_n \rightarrow s$ and $t_n \rightarrow t$ in $[0, 1_{\odot}]$. We prove that $s_n \odot t_n \rightarrow s \odot t$. If $s > 0$ this is a consequence of the properties of the pseudo-multiplication $\odot$, so suppose that $s = 0$, and let us show that $s_n \odot t_n \rightarrow 0$. 
For this purpose, let $\varepsilon > 0$, and let $s_n' = s_n \oplus \varepsilon$. Since $s_n'$ tends to $\varepsilon > 0$, the sequence $s_n' \odot t_n$ tends to $\varepsilon \odot t$. But $s_n' \odot t_n = (s_n \oplus \varepsilon) \odot t_n = (s_n \odot t_n) \oplus (\varepsilon \odot t_n)$, and taking the limit superior gives $\varepsilon \odot t = \ell \oplus (\varepsilon \odot t)$, where $\ell := \limsup (s_n \odot t_n)$. This shows that $\ell \leqslant \varepsilon \odot t$, for all $\varepsilon > 0$. In other words, $\ell \leqslant O(t)$. Moreover, we are in the non-degenerate case, and $t \leqslant 1_{\odot}$, so that $O(t) = 0$. Hence, $\ell = 0$, so that $\lim (s_n \odot t_n)  = 0$, which is the desired result. 
\end{proof}

We summarize the previous lemmata in a single theorem:  

\begin{theorem}\label{lem:phi}
Given a pseudo-multiplication $\odot$, the following conditions are equivalent: 
\begin{itemize}
	\item $\odot$ is non-degenerate, i.e.\ $1_{\odot}$ is $\odot$-finite; 
	\item there exists some positive $\odot$-finite element; 
	\item the monoid $([0, 1_{\odot}], \odot)$ is commutative; 
	\item the set $F_{\odot}$ of $\odot$-finite elements is either $[0, \infty]$ or of the form $[0, \phi)$ for some $\phi \in (1_{\odot}, \infty]$. 
\end{itemize}
Moreover, in the case where $F_{\odot} = [0, \phi)$, 
then $\phi$ satisfies $O(\phi) = \phi$ and $t \odot \phi = \phi \odot t = \phi$, for all $0 < t \leqslant \phi$. In particular, $\phi$ is idempotent. 
\end{theorem}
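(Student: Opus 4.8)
The plan is to read off the equivalence of the four conditions from the three preceding lemmata, and then to isolate the genuinely new content of the supplementary assertion, namely the identity $t \odot \phi = \phi \odot t = \phi$, as a statement about the idempotence of $\phi$.

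For the equivalences I would argue as follows. The equivalence of the first two conditions is exactly Lemma~\ref{lem:finite}, and the equivalence of the first and third is exactly Lemma~\ref{lem:com}. For the fourth condition, recall that non-degeneracy was \emph{defined} to mean $F_{\odot} \neq \{0\}$; thus, applying Lemma~\ref{lem:odotfi} and discarding the case $F_{\odot} = \{0\}$, non-degeneracy is equivalent to $F_{\odot}$ being $[0, \infty]$ or $[0, \phi)$ with $\phi \in (1_{\odot}, \infty]$. This closes the cycle and is entirely routine.

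It remains to treat the supplementary assertion in the case $F_{\odot} = [0, \phi)$. Lemma~\ref{lem:odotfi} already provides $O(\phi) = \phi$, hence $s \odot \phi \geqslant \phi$ for every $s > 0$, with equality for $0 < s \leqslant 1_{\odot}$ by monotonicity and the left-identity property. I claim that everything reduces to the idempotence $\phi \odot \phi = \phi$. Indeed, once this is known, monotonicity gives $t \odot \phi \leqslant \phi \odot \phi = \phi$ for $t \leqslant \phi$, while $t \odot \phi \geqslant \phi$ is already available, so $t \odot \phi = \phi$; similarly $\phi \odot t \leqslant \phi \odot \phi = \phi$, and the reverse inequality follows from the observation that $s \odot (\phi \odot t) = (s \odot \phi) \odot t = \phi \odot t$ for $0 < s \leqslant 1_{\odot}$, which forces $O(\phi \odot t) = \phi \odot t$ and hence (as $\phi \odot t > 0$, since $\odot$ has no zero divisors) makes $\phi \odot t$ a $\odot$-infinite element, thus $\geqslant \phi$. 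When $\phi = \infty$ the identity falls out of $O(\phi) = \phi$ and this same $\odot$-infinite argument with no appeal to continuity, so I would dispose of that case first.

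The main obstacle is therefore the single inequality $\phi \odot \phi \leqslant \phi$ when $\phi < \infty$, the lower bound $\phi \odot \phi \geqslant \phi$ being immediate from $O(\phi) = \phi$. My plan here is first to establish that $F_{\odot}$ is stable under $\odot$: for $a, b \in [0, \phi)$, pick via Proposition~\ref{lem:inv} some $s_1 > 0$ with $s_1 \odot a \leqslant 1_{\odot}$, so that $s_1 \odot (a \odot b) = (s_1 \odot a) \odot b \leqslant 1_{\odot} \odot b = b < \phi$; since $b$ is $\odot$-finite and $F_{\odot}$ is downward closed, $s_1 \odot (a \odot b)$ is $\odot$-finite, whence $a \odot b$ is $\odot$-finite by Proposition~\ref{lem:inv}. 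Choosing then sequences $s_n, t_n \uparrow \phi$ of $\odot$-finite elements, each product $s_n \odot t_n$ lies in $[0, \phi)$, and passing to the limit through the joint continuity of $\odot$ at the interior point $(\phi, \phi) \in (0, \infty) \times [0, \infty]$ yields $\phi \odot \phi = \lim_n s_n \odot t_n \leqslant \phi$, as desired. I expect the delicate point to be exactly this use of continuity: it must be invoked at $(\phi, \phi)$ rather than at boundary points such as $(\phi, 0)$, where the annihilator axiom creates a genuine discontinuity, so the approximation has to be carried out with both arguments bounded away from $0$ and converging to $\phi$ from within $F_{\odot}$.
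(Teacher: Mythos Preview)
Your argument is correct, and the equivalences are handled exactly as in the paper (by appeal to the three preceding lemmata). The supplementary assertion, however, is organized differently.

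The paper first proves $t \odot \phi = \phi$ for $0 < t < \phi$ \emph{directly}, without going through idempotence or stability of $F_{\odot}$: pick $s' > 0$ with $t \odot s' \leqslant 1_{\odot}$ (Proposition~\ref{lem:inv}) and chain
\[
\phi = O(\phi) \leqslant t \odot \phi = t \odot O(\phi) \leqslant t \odot (s' \odot \phi) = (t \odot s') \odot \phi \leqslant 1_{\odot} \odot \phi = \phi.
\]
Idempotence then follows by letting $t \to \phi$ in $t \odot \phi = \phi$ (one-variable continuity of $s \mapsto s \odot \phi$ suffices). Finally $\phi \odot t = \phi$ is obtained by sandwiching: Proposition~\ref{lem:inv} (contrapositive of the last item) gives $\phi \odot s$ $\odot$-infinite, hence $\geqslant \phi$, for every $s > 0$, while $\phi \odot t \leqslant \phi \odot \phi = \phi$.

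You instead reduce everything to the single inequality $\phi \odot \phi \leqslant \phi$, which you establish via the auxiliary fact that $F_{\odot}$ is closed under $\odot$ and a two-variable continuity argument at $(\phi, \phi)$. This is a legitimate alternative, and the stability of $F_{\odot}$ is a pleasant structural byproduct. The paper's route is shorter and avoids the detour through stability; your route makes the role of idempotence more explicit and isolates the continuity input at a single point. Both are equally rigorous.
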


\begin{proof}
Let $0 < t < \phi$. Since $t$ is $\odot$-finite, there is some $s' > 0$ such that $t \odot s' \leqslant 1_{\odot}$ by Proposition~\ref{lem:inv}. 
Thus, $\phi = O(\phi) \leqslant t \odot \phi = t \odot O(\phi) \leqslant t \odot (s' \odot \phi) = (t \odot s') \odot \phi \leqslant 1_{\odot} \odot \phi = \phi$. We obtain that $t \odot \phi = \phi$, and when  $t \rightarrow \phi$ we get $\phi \odot \phi = \phi$, i.e.\ $\phi$ is idempotent. 
Using again Proposition~\ref{lem:inv}, the fact that $\phi$ be $\odot$-infinite implies $\phi \odot s \geqslant \phi$ for all $s > 0$. Hence, if $0 < t < \phi$, we get $\phi = \phi \odot \phi \geqslant \phi \odot t \geqslant \phi$, i.e.\ $\phi \odot t = \phi$. 
\end{proof}

\begin{corollary}\label{lem:dec}
Given a pseudo-multiplication $\odot$, it is not possible to find $t < \phi$ and $t' > \phi$ such that $t \odot t' = \phi$, if $\phi$ denotes the supremum of the set of $\odot$-finite elements. 
\end{corollary}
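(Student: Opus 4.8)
The plan is to argue by contradiction after reducing to the only genuinely nonvacuous configuration. Writing $\phi = \sup F_{\odot}$, Lemma~\ref{lem:odotfi} tells us that $F_{\odot}$ is $\{0\}$, $[0,\infty]$, or $[0,\phi)$ with $\phi\in(1_{\odot},\infty]$. If $\phi=0$ (the degenerate case) the hypothesis $t<\phi$ is impossible, and if $\phi=\infty$ the hypothesis $t'>\phi$ is impossible; in both situations the statement holds vacuously. So I would assume $F_{\odot}=[0,\phi)$ with $1_{\odot}<\phi<\infty$, which is precisely the case in which Theorem~\ref{lem:phi} supplies the boundary identities $r\odot\phi=\phi\odot r=\phi$ for all $0<r\leqslant\phi$. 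Suppose, for contradiction, that there exist $t<\phi<t'$ with $t\odot t'=\phi$. The case $t=0$ is dispatched immediately: since $0$ is an annihilator, $t\odot t'=0=\phi$, contradicting $\phi>1_{\odot}>0$. Hence $0<t<\phi<t'$.

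Next I would study the single-variable map $f(s)=s\odot t'$, which is non-decreasing by monotonicity and continuous on $(0,\infty]$ by the assumed continuity of $s\mapsto s\odot t'$. By hypothesis $t$ lies in the set $\{s>0:f(s)=\phi\}$, so its supremum $s^{*}$ is well defined and satisfies $s^{*}\geqslant t>0$. The key normalization comes from the left identity: $f(1_{\odot})=1_{\odot}\odot t'=t'>\phi$, and monotonicity then forces $f(s)>\phi$ for every $s\geqslant 1_{\odot}$. Consequently $s^{*}<1_{\odot}$, so in particular $s^{*}<\phi<\infty$, and continuity of $f$ at the finite point $s^{*}$ gives $s^{*}\odot t'=f(s^{*})=\phi$.

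The crux is then to show that $s^{*}$ is simultaneously forced up to $\phi$, which is impossible since $s^{*}<1_{\odot}<\phi$. For any $r$ with $0<r<\phi$, associativity yields $(r\odot s^{*})\odot t'=r\odot(s^{*}\odot t')=r\odot\phi=\phi$, the last equality by Theorem~\ref{lem:phi}. Thus $r\odot s^{*}$ belongs to $\{s>0:f(s)=\phi\}$ (it is positive because $\odot$ has no zero divisors), so $r\odot s^{*}\leqslant s^{*}$ for every $r\in(0,\phi)$. Letting $r\to\phi^{-}$ and invoking the continuity of $r\mapsto r\odot s^{*}$ together with $\phi\odot s^{*}=\phi$ (again Theorem~\ref{lem:phi}, valid since $0<s^{*}\leqslant\phi$), I obtain $\phi=\phi\odot s^{*}\leqslant s^{*}<1_{\odot}<\phi$, a contradiction. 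I expect the main obstacle to be precisely this tension: recognizing that the left identity pins $s^{*}$ strictly below $1_{\odot}$, while the idempotent boundary behaviour of $\phi$ (the fact that multiplication by $\phi$ absorbs everything in $(0,\phi]$) drives the same threshold up to $\phi$; once $s^{*}$ is isolated as the right object, the continuity-and-limit manoeuvre closes the argument.
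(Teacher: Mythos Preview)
Your argument is correct, but it takes a considerably longer route than the paper's. The paper dispenses with the auxiliary supremum $s^{*}$ and the limit $r\to\phi^{-}$ entirely: after the same reduction to $F_{\odot}=[0,\phi)$ and $0<t<\phi$, it simply multiplies the equation $\phi=t\odot t'$ on the left by $\phi$ and uses the absorption identity from Theorem~\ref{lem:phi} in the form $\phi\odot t=\phi$ (together with idempotency $\phi\odot\phi=\phi$) to obtain
\[
\phi=\phi\odot\phi=\phi\odot(t\odot t')=(\phi\odot t)\odot t'=\phi\odot t'.
\]
Since $\phi>1_{\odot}$, monotonicity then gives $\phi=\phi\odot t'\geqslant 1_{\odot}\odot t'=t'$, contradicting $t'>\phi$. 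So the paper exploits the \emph{right}-absorption $\phi\odot t=\phi$ and a single monotonicity step, whereas you use the \emph{left}-absorption $r\odot\phi=\phi$ to feed an invariance property of the level set $\{s:s\odot t'=\phi\}$, and then close with a continuity/limit manoeuvre. Your approach is self-contained and showcases how the level-set structure interacts with the boundary idempotent, but it invokes strictly more machinery (continuity of $s\mapsto s\odot t'$, the supremum $s^{*}$, and a limit) to reach the same contradiction that the paper obtains in two lines.
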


\begin{proof}
Again, let $F_{\odot}$ be the set of $\odot$-finite elements. The result is clear if $F_{\odot} = \{ 0 \}$ or $F_{\odot} = [0, \infty]$. Now if $F_{\odot} = [0, \phi)$ for some $\phi \in (1_{\odot}, \infty]$, we write $\phi = t \odot t'$ with $t < \phi$. Then $\phi = \phi \odot \phi = \phi \odot (t \odot t') = (\phi \odot t) \odot t' = \phi \odot t'$ by Theorem~\ref{lem:phi}. Since $\phi > 1_{\odot}$, this implies $\phi \geqslant 1_{\odot} \odot t' = t'$. 
\end{proof}

\bibliographystyle{plain}

\def\cprime{$'$} \def\cprime{$'$} \def\cprime{$'$} \def\cprime{$'$}
  \def\ocirc#1{\ifmmode\setbox0=\hbox{$#1$}\dimen0=\ht0 \advance\dimen0
  by1pt\rlap{\hbox to\wd0{\hss\raise\dimen0
  \hbox{\hskip.2em$\scriptscriptstyle\circ$}\hss}}#1\else {\accent"17 #1}\fi}
  \def\ocirc#1{\ifmmode\setbox0=\hbox{$#1$}\dimen0=\ht0 \advance\dimen0
  by1pt\rlap{\hbox to\wd0{\hss\raise\dimen0
  \hbox{\hskip.2em$\scriptscriptstyle\circ$}\hss}}#1\else {\accent"17 #1}\fi}

\end{document}